\newtheorem{theorem}{Theorem}
\newtheorem{corollary}[theorem]{Corollary}
\theoremstyle{remark}
\theoremstyle{definition}
\newtheorem{example}{Example}   
\DeclareMathOperator{\supp}{supp}
\title{Lobachevsky-type Formulas via Fourier Analysis}
\author{\shortstack{Runze Cai$^1$\\cairunze@sjtu.edu.cn} \and \shortstack{Horst Hohberger$^1$\\horst@sjtu.edu.cn} \and \shortstack{Mian Li$^{1}$\\mianli@sjtu.edu.cn}}
\date{%
    $^1$University of Michigan-Shanghai Jiao Tong University Joint Institute, Shanghai Jiao Tong University, Minhang, Shanghai 200240, China\\[\baselineskip]%
    \today
}
\begin{document}

\maketitle

\begin{abstract}
Recently renewed interest in the Lobachevsky-type integrals and interesting identities involving the cardinal sine motivate an 
extension of the classical Parseval formula involving both periodic and non-periodic functions. We develop a version of the Parseval formula  
that is often more practical in applications and illustrate its use by extending recent results on Lobachevsky-type integrals. Some previously known, interesting identities are re-proved in a more transparent manner and new formulas for integrals involving 
cardinal sine and Bessel functions are given.
\end{abstract}

\section{Introduction}
\label{sec:introduction}

The following is known as a Lobachevsky-type integral: 
\begin{equation*}
  \int_{-\infty}^{\infty} \left(\frac{\sin\pi x}{\pi x} \right)^{k}p(x)\, dx
\end{equation*}
Here $k\in\mathbb{N}\setminus\{0\}$, and $p\colon\mathbb{R}\to\mathbb{C}$ is a  periodic, real-valued function with period $T>0$ that is assumed to be integrable over a single period. 
Recently, Jolany~\cite{jolany_extension_2018} has published identities for this integral when $k$ is even, with continuous $p$ being of period $T=1$, using methods of complex analysis. We will base our discussion on the Fourier transform and obtain corresponding identities for all $k\in\mathbb{N}\setminus\{0\}$ and  $p$ integrable of arbitrary period $T$.

The Lobachevsky integral is closely related to the Shannon basis of information theory. 
Much of our treatment is inspired by identities that are ``folklore'' in the signal processing community, where the reconstruction formula in the Shannon basis is precisely the cardinal sine expansion~\cite{unser_sampling-50_2000}.

\section{Parseval Formula}
\label{sec:parseval}

For functions $f\colon\mathbb{R}\to\mathbb{C}$, we define the Fourier transform as follows,
\begin{equation}
  \label{eq:fourier}
  \hat{f}(\xi)=(\mathcal{F}f)(\xi)=\int_{-\infty}^{\infty}f(x)e^{-2\pi i x\xi}\, dx,
\end{equation}
whenever the integral exists. Similarly, the inverse Fourier transform is defined by 
\begin{equation*}
\check{f}(\xi)=(\mathcal{F}^{-1}f)(\xi)=\int_{-\infty}^{\infty}f(x)e^{2\pi i x\xi}\, dx,
\end{equation*}
again, whenever the integral exists. 

The set of absolutely integrable functions on the real axis is denoted by $L^{1}(\mathbb{R})$, while 
$\operatorname*{BV}(\mathbb{R})$ denotes the set of functions that are of bounded variation on $\mathbb{R}$.

In this treatment, complex-valued, periodic functions of period $T>0$ that are absolutely integrable over a single period play a major role. The set of such functions is denoted by $L^{1}([-\frac{T}{2}, \frac{T}{2}])$. In the case $T=1$, we will write simply $L^{1}(\mathbb{T})$. 

For $f\in L^{1}([-\frac{T}{2}, \frac{T}{2}])$ we define the Fourier coefficient
\begin{align*}
\hat{f}(n)&=\int_{-T/2}^{T/2}e^{-2\pi i n x/T}f(x)\, dx, & n&\in\mathbb{Z}.
\end{align*}
The minor clash of notation with~\eqref{eq:fourier} should not give rise to confusion~\cite{stein_fourier_2011}. 

We will also use the standard notation
\begin{align*}
f(x^-)&:=\lim_{\varepsilon\searrow0}f(x-\varepsilon), & f(x^+)&:=\lim_{\varepsilon\searrow0}f(x+\varepsilon)	
\end{align*}
for any function $f$ on $\mathbb{R}$ and $x\in\mathbb{R}$.

The classical strong form Parseval formula~\cite[Theorem~8.18, Chapter~IV]{zygmund_trigonometric_2002} of the so-called ``mixed type'' (i.e., a periodic and a non-periodic function) may be formulated as follows,

\begin{theorem}[Parseval formula of mixed type in the strong form]
\label{thm:4}
Let $f\in L^1(\mathbb{T})$ and $g\in L^{1}(\mathbb{R})\cap \operatorname*{BV}(\mathbb{R})$. 
Then
\begin{equation}
  \label{eq:89}
  \int_{-\infty}^{\infty}\overline{f(x)}g(x)\, dx=\sum_{n=-\infty}^{\infty}\overline{\hat{f}(n)}\hat{g}(n)
\end{equation}
where $\overline{f(x)}$ denotes the complex conjugate of $f(x)$.
\end{theorem}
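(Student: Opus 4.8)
The plan is to reduce the mixed-type identity to the purely periodic Parseval formula by \emph{periodizing} the non-periodic factor $g$. Set
\begin{equation*}
  G(x):=\sum_{k\in\mathbb{Z}}g(x+k).
\end{equation*}
I would first record three facts about $G$. (i) Since $g\in L^{1}(\mathbb{R})$, Tonelli's theorem gives $\sum_{k}\int_{0}^{1}|g(x+k)|\,dx=\|g\|_{L^{1}(\mathbb{R})}<\infty$, so the series converges absolutely for a.e.\ $x$ and $G\in L^{1}(\mathbb{T})$. (ii) The same computation, after the substitution $y=x+k$ and using $e^{2\pi ink}=1$, shows $\hat{G}(n)=\sum_{k}\int_{k}^{k+1}g(y)e^{-2\pi iny}\,dy=\hat{g}(n)$ for every $n\in\mathbb{Z}$, where on the right $\hat{g}$ is the Fourier transform \eqref{eq:fourier}. (iii) Here the hypothesis $g\in\operatorname*{BV}(\mathbb{R})$ enters: for $x\in[0,1]$ one has $|g(x+k)|\le|g(k)|+V_{[k,k+1]}(g)$, and integrating the inequality $|g(k)|\le\int_{k}^{k+1}|g|+V_{[k,k+1]}(g)$ over the unit interval and summing yields $\sum_{k}|g(k)|\le\|g\|_{L^{1}(\mathbb{R})}+V(g)<\infty$; hence $\sup_{x}\sum_{k}|g(x+k)|<\infty$, so $G$ is bounded, and similarly $V_{[0,1]}(G)\le V(g)$, so $G\in\operatorname*{BV}(\mathbb{T})$.

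Granting this, since $f$ has period $1$ I would split the left-hand side of \eqref{eq:89} over unit intervals and interchange sum and integral:
\begin{equation*}
  \int_{-\infty}^{\infty}\overline{f(x)}g(x)\,dx=\sum_{k\in\mathbb{Z}}\int_{0}^{1}\overline{f(x)}g(x+k)\,dx=\int_{0}^{1}\overline{f(x)}G(x)\,dx .
\end{equation*}
The interchange is justified, and the original integral is seen to converge absolutely, because $\int_{0}^{1}|f(x)|\sum_{k}|g(x+k)|\,dx\le\|G\|_{\infty}\,\|f\|_{L^{1}(\mathbb{T})}<\infty$ by fact (iii). It now suffices to prove the periodic identity $\int_{0}^{1}\overline{f}\,G=\sum_{n}\overline{\hat{f}(n)}\,\hat{G}(n)$ for $f\in L^{1}(\mathbb{T})$ and $G\in\operatorname*{BV}(\mathbb{T})$, and then substitute $\hat{G}(n)=\hat{g}(n)$.

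For the periodic identity I would introduce the symmetric Dirichlet partial sums $S_{N}G(x)=\sum_{|n|\le N}\hat{G}(n)e^{2\pi inx}$ and use the elementary finite-sum identity
\begin{equation*}
  \int_{0}^{1}\overline{f(x)}\,S_{N}G(x)\,dx=\sum_{|n|\le N}\hat{G}(n)\int_{0}^{1}\overline{f(x)}e^{2\pi inx}\,dx=\sum_{|n|\le N}\overline{\hat{f}(n)}\,\hat{G}(n),
\end{equation*}
which uses $\int_{0}^{1}\overline{f(x)}e^{2\pi inx}\,dx=\overline{\hat{f}(n)}$. The conclusion follows on letting $N\to\infty$ by dominated convergence: for a function of bounded variation on the circle the partial sums $S_{N}G$ are \emph{uniformly} bounded, which supplies the dominating function $C|f|\in L^{1}(\mathbb{T})$, while by the Dirichlet--Jordan theorem $S_{N}G(x)\to\tfrac12\bigl(G(x^{+})+G(x^{-})\bigr)$ at every point, hence $S_{N}G\to G$ a.e. Thus $\int_{0}^{1}\overline{f}\,G=\lim_{N\to\infty}\sum_{|n|\le N}\overline{\hat{f}(n)}\,\hat{G}(n)$, which together with the preceding paragraph gives \eqref{eq:89}, the series being understood (as is usual for Fourier series) in the sense of symmetric partial sums.

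The main obstacle is this last limit. The series $\sum_{n}\overline{\hat{f}(n)}\,\hat{g}(n)$ need not converge absolutely---one controls $\hat{f}(n)$ only by $\|f\|_{L^{1}(\mathbb{T})}$ and $\hat{g}(n)$ only by $O(1/|n|)$---so the equality must be read with symmetric partial sums and the passage to the limit cannot be carried out termwise; it genuinely relies on two classical but non-elementary facts about Fourier series of functions of bounded variation, namely the uniform boundedness of the Dirichlet partial sums and the Dirichlet--Jordan pointwise convergence theorem (precisely the circle of results around the cited reference). One could instead run the argument with Ces\`aro (Fej\'er) means, where uniform boundedness is automatic, but then recovering ordinary convergence of the series requires an extra Tauberian step using $\hat{f}(n)\to0$ and $n\hat{g}(n)=O(1)$; I would regard the Dirichlet--Jordan route as the cleaner of the two. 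A secondary point worth flagging is that the hypothesis $g\in\operatorname*{BV}(\mathbb{R})$, rather than merely $g\in L^{1}(\mathbb{R})$, is exactly what guarantees both that the left-hand integral of \eqref{eq:89} converges and that the periodization $G$ is a bounded function of bounded variation.
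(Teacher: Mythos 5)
Your proof is correct, but there is nothing in the paper to compare it against: Theorem~\ref{thm:4} is quoted from Zygmund without proof and serves only as a foil for the weak form. Your argument is essentially the classical one, and it is worth noting how closely it parallels the machinery the paper \emph{does} develop: the periodization $G(x)=\sum_{k}g(x+k)$ is exactly the device used in the paper's proof of the Poisson summation formula (Theorem~\ref{thm:3}), and the unfolding of $\int_{\mathbb{R}}\overline{f}g$ into $\int_{0}^{1}\overline{f}G$ over unit intervals mirrors the computation in the proof of Theorem~\ref{thm:2}. The two places where your argument genuinely goes beyond the paper's toolkit are (a) the bookkeeping showing that $g\in\operatorname*{BV}(\mathbb{R})$ forces the periodization to be bounded with $V_{[0,1]}(G)\le V(g)$ --- which is also what makes the left-hand integral of \eqref{eq:89} absolutely convergent, a point the theorem statement glosses over --- and (b) the uniform boundedness of the Dirichlet partial sums $S_{N}G$ for $G\in\operatorname*{BV}(\mathbb{T})$, needed to dominate $\overline{f}\,S_{N}G$ by $C\lvert f\rvert$; the paper's own proofs never need this because their sums over $\lvert n\rvert\le A$ are finite. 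Both ingredients are classical and correctly invoked, and your closing caveat that the series in \eqref{eq:89} converges only in the sense of symmetric partial sums (it need not converge absolutely) is an honest reading of what the theorem can actually deliver.
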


The drawback of this theorem is that the condition on $g$ is frequently difficult to check: it may not be easy to show 
that $g$ is of bounded variation. More significantly, in certain interesting situations, e.g., where the cardinal sine 
is involved, Theorem~\ref{thm:4} is simply not applicable. 

Therefore, we establish the following result in the spirit of~\cite[Theorem~47]{titchmarsh_introduction_1948}, which 
yields a ``weak form'' Parseval formula of mixed type under the condition that the 
Fourier transform of $g$, rather than $g$ itself, is of compact support and of bounded variation at appropriate points. 
We denote the support of a function $g$ by $\operatorname*{supp}{g}$.


\begin{theorem}[Parseval formula of mixed type in the weak form]
\label{thm:2}
Let $f\in L^1(\mathbb{T})$, $g\in L^{1}(\mathbb{R})$, and suppose that there exists some $A>0$ such that 
$\supp g\subset[-A,A]$. Further, let $g$ be of bounded variation in neighborhoods of all $n\in\mathbb{Z}$ 
with $\lvert{n}\rvert\leq A$. Then
\begin{equation*}
\int_{-\infty}^{\infty}f(x)\hat{g}(x)\, dx=\sum_{\substack{n\in\mathbb{Z} \\ \lvert{n}\rvert\leq A}}\hat{f}(n)\cdot\frac{g(n^{-})+g(n^{+})}{2}
\end{equation*}
\end{theorem}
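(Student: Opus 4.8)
The plan is to read the left-hand side as the (generally improper) symmetric limit $\lim_{R\to\infty}\int_{-R}^{R}f(x)\hat g(x)\,dx$ and to compute it by pushing the limit through a Fubini interchange. Since $\supp g\subset[-A,A]$, write $\hat g(x)=\int_{-A}^{A}g(t)e^{-2\pi itx}\,dt$; because $f\in L^{1}(\mathbb{T})$ is locally integrable and $g\in L^{1}(\mathbb{R})$, the function $f(x)g(t)e^{-2\pi itx}$ is integrable on $[-R,R]\times[-A,A]$, so Fubini gives
\[
  \int_{-R}^{R}f(x)\hat g(x)\,dx=\int_{-A}^{A}g(t)\,\Phi_{R}(t)\,dt,\qquad\text{where }\ \Phi_{R}(t):=\int_{-R}^{R}f(x)e^{-2\pi itx}\,dx .
\]
The crux is that for the half-integer cutoffs $R=M+\tfrac12$ ($M\in\mathbb{N}$) the inner integral is a Dirichlet kernel: folding $[-(M+\tfrac12),M+\tfrac12]$ into translates of $[-\tfrac12,\tfrac12]$ and using $f(u+k)=f(u)$ yields, for $t\notin\mathbb{Z}$,
\[
  \Phi_{M+1/2}(t)=\Bigl(\sum_{k=-M}^{M}e^{-2\pi itk}\Bigr)h(t)=\frac{\sin\bigl((2M+1)\pi t\bigr)}{\sin\pi t}\,h(t),\qquad h(t):=\int_{-1/2}^{1/2}f(u)e^{-2\pi itu}\,du ,
\]
where $h\in C^{1}(\mathbb{R})$ is bounded (indeed $\lvert h\rvert\le\lVert f\rVert_{L^{1}(\mathbb{T})}$) and satisfies $h(n)=\hat f(n)$ for every $n\in\mathbb{Z}$.

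Next I would localize. Fix $\delta<\tfrac12$, write $I_{n}:=(n-\delta,n+\delta)\cap[-A,A]$ for the integers $n$ with $\lvert n\rvert\le A$, and let $E:=[-A,A]\setminus\bigcup_{n}I_{n}$. On $E$ the factor $1/\sin\pi t$ is bounded, so $g(t)h(t)/\sin\pi t\in L^{1}(E)$ and $\int_{E}g(t)h(t)\tfrac{\sin((2M+1)\pi t)}{\sin\pi t}\,dt\to0$ by the Riemann--Lebesgue lemma. On $I_{n}$ I would use $1/\sin\pi t=(-1)^{n}/(\pi(t-n))+\psi_{n}(t)$ with $\psi_{n}$ smooth near $n$, together with $\sin((2M+1)\pi t)=(-1)^{n}\sin((2M+1)\pi(t-n))$, to rewrite the contribution as $\hat f(n)\int_{I_{n}}g(t)\tfrac{\sin((2M+1)\pi(t-n))}{\pi(t-n)}\,dt$ plus error terms that vanish by Riemann--Lebesgue --- here one uses that $g$ is bounded near $n$ (being of bounded variation there) and that $(h(t)-h(n))/(t-n)$ is bounded because $h\in C^{1}$. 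By the classical Dirichlet--Jordan theorem, the bounded-variation hypothesis on $g$ near $n$ forces $\int_{I_{n}}g(t)\tfrac{\sin((2M+1)\pi(t-n))}{\pi(t-n)}\,dt\to\frac{g(n^{-})+g(n^{+})}{2}$ as $M\to\infty$. For $n=\pm A$ (when $A\in\mathbb{Z}$) the same conclusion holds: $g$ vanishes beyond $[-A,A]$, so the one-sided Dirichlet integral yields $g(A^{-})/2$, which equals $\tfrac{g(A^{-})+g(A^{+})}{2}$ since $g(A^{+})=0$. Summing the finitely many terms gives the asserted identity along $R=M+\tfrac12$.

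To finish I would remove the restriction to half-integers: for $R=M+\tfrac12+\theta$ with $\theta\in[0,1)$, the extra pieces $\int_{M+1/2}^{R}f\hat g$ and $\int_{-R}^{-(M+1/2)}f\hat g$ reduce, after the same Fubini step, to integrals of $g$ against uniformly bounded functions of $t$ that oscillate with frequency tending to infinity as $M\to\infty$, so they vanish by a version of the Riemann--Lebesgue lemma that is uniform over the relevant compact family (in particular this shows the improper limit exists). I expect the main difficulties to be organizational rather than conceptual: correctly isolating the Dirichlet-kernel singularities at the integers, verifying the endpoint case $A\in\mathbb{Z}$ so that the one-sided limit matches the symmetric average $\tfrac{g(n^{-})+g(n^{+})}{2}$ in the statement, and carrying out this last uniformity argument; the only substantial classical input is the Dirichlet--Jordan localization theorem.
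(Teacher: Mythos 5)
Your proof is correct, but it follows a genuinely different route from the paper's. The paper first establishes the Poisson summation formula for compactly supported $g$ of bounded variation near the integers (Theorem~\ref{thm:3}), by periodizing $g$ to $G=\sum_{k}g(\cdot+k)$ and applying the Dirichlet--Jordan test to the Fourier series of $G$; it then deduces $\sum_{m}\hat g(m+\xi)=\sum_{\lvert n\rvert\le A}\tfrac{1}{2}\bigl(g(n^{+})+g(n^{-})\bigr)e^{-2\pi i n\xi}$ and proves Theorem~\ref{thm:2} by folding $\int f\hat g$ onto $[-\tfrac12,\tfrac12]$ via the periodicity of $f$ and invoking dominated convergence. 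You instead move the truncated integral onto the $g$-side by Fubini, recognize $\int_{-(M+1/2)}^{M+1/2}f(x)e^{-2\pi i tx}\,dx$ as a Dirichlet kernel times the entire function $h(t)=\int_{-1/2}^{1/2}f(u)e^{-2\pi i tu}\,du$, and finish with Riemann--Lebesgue off the integers and Dirichlet--Jordan localization at each integer. Both arguments rest on the same classical input (Jordan's test), but yours is self-contained, closer in spirit to Titchmarsh's Theorem~47 (the paper's declared model), and more careful about the conditional convergence of the left-hand side: you allow arbitrary symmetric cutoffs $R$ and supply the uniform Riemann--Lebesgue argument over the compact family $\{g\Psi_\theta\}$ needed to pass from half-integer cutoffs to the general improper limit, whereas the paper tacitly interprets the integral as a half-integer-cutoff limit and glosses over the uniform boundedness in $M$ and $\xi$ of the partial sums $\sum_{\lvert m\rvert\le M}\hat g(\xi+m)$ required for its dominated-convergence step. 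What the paper's route buys is a reusable intermediate result (the Poisson summation formula, applied elsewhere in the paper) and a very short proof of the theorem itself. Your treatment of the endpoint case $A\in\mathbb{Z}$, where $g(A^{+})=0$ makes the one-sided Dirichlet integral agree with the two-sided average in the statement, is correct and is a detail worth keeping explicit.
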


Theorem~\ref{thm:2} can be adapted to periodic functions $p$ with arbitrary period $T>0$  by setting $f(x):=p(Tx)$, yielding 

\begin{corollary}
  \label{cor:2}
Let $p\in L^1([-\frac{T}{2}, \frac{T}{2}])$, $g\in L^{1}(\mathbb{R})$, and suppose that there exists some $A>0$ such that 
$\supp g\subset[-A,A]$. Further, let $g$ be of bounded variation in neighborhoods of all points $n/T$, $n\in\mathbb{Z}$, 
with $\lvert{n/T}\rvert\leq A$. Then
\begin{equation}
\label{eq:27}
\int_{-\infty}^{\infty}p(x)\hat{g}(x)\, dx=\sum_{\substack{n\in\mathbb{Z} \\ \lvert{n/T}\rvert\leq A}}
\hat{p}(n)\cdot\frac{g((n/T)^{-})+g((n/T)^{+})}{2}
\end{equation}
\end{corollary}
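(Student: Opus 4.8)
The plan is to obtain Corollary~\ref{cor:2} directly from Theorem~\ref{thm:2} through the rescaling announced in the text. Set $f(x):=p(Tx)$, so that $f$ is $1$-periodic; the change of variables $u=Tx$ shows that $f\in L^1(\mathbb{T})$ and that $\hat{f}(n)=T^{-1}\hat{p}(n)$ for every $n\in\mathbb{Z}$. On the non-periodic side, set $h(u):=g(u/T)$. The substitution $v=u/T$ in~\eqref{eq:fourier} gives the dilation identity $\hat{h}(\xi)=T\,\hat{g}(T\xi)$, and the same substitution shows $h\in L^{1}(\mathbb{R})$ with $\supp h\subset[-TA,TA]$.

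Next I would check that the local hypotheses transfer correctly. Since $u\mapsto u/T$ is an increasing linear bijection of $\mathbb{R}$, the function $h$ is of bounded variation on a neighborhood of $m\in\mathbb{Z}$ exactly when $g$ is of bounded variation on a neighborhood of $m/T$, and in that case the one-sided limits match, $h(m^{\pm})=g((m/T)^{\pm})$, because $\varepsilon\searrow0$ is equivalent to $\varepsilon/T\searrow0$. Combined with the equivalence $\lvert m\rvert\le TA\iff\lvert m/T\rvert\le A$, this shows that the bounded-variation hypothesis imposed on $g$ in Corollary~\ref{cor:2} is precisely the one on $h$ required by Theorem~\ref{thm:2} (with the constant $TA$ in place of $A$). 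Hence the hypotheses of Theorem~\ref{thm:2} hold for the pair $(f,h)$.

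Applying Theorem~\ref{thm:2} to $(f,h)$, I would handle the two sides separately. For the left-hand side, the substitution $x=Ty$ followed by the dilation identity gives
\[
\int_{-\infty}^{\infty}p(x)\hat{g}(x)\,dx
 =T\int_{-\infty}^{\infty}f(y)\,\hat{g}(Ty)\,dy
 =\int_{-\infty}^{\infty}f(y)\,\hat{h}(y)\,dy ,
\]
the change of variables being legitimate because $y\mapsto Ty$ is a smooth bijection of $\mathbb{R}$. For the right-hand side, Theorem~\ref{thm:2} yields $\sum_{\lvert m\rvert\le TA}\hat{f}(m)\cdot\tfrac12\bigl(h(m^{-})+h(m^{+})\bigr)$; inserting $\hat{f}(m)=T^{-1}\hat{p}(m)$ and $h(m^{\pm})=g((m/T)^{\pm})$ and reindexing through $\lvert m\rvert\le TA\iff\lvert m/T\rvert\le A$ turns this into the right-hand side of~\eqref{eq:27}.

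The argument is, in essence, bookkeeping, and that is exactly where care is needed: three scaling constants enter — the Jacobian from $x=Ty$, the dilation factor in $\hat{h}(\xi)=T\,\hat{g}(T\xi)$, and the factor relating $\hat{f}(n)$ to $\hat{p}(n)$ — and they must be combined consistently with the normalization fixed for the Fourier coefficients in order to reach~\eqref{eq:27}. The only step that is not purely formal is the remark that the dilation $g\mapsto g(\cdot/T)$ respects both ``bounded variation near a point'' and one-sided limits, which follows at once from the monotonicity of the dilation.
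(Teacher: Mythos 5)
Your route is exactly the one the paper intends --- the paper's entire ``proof'' of Corollary~\ref{cor:2} is the single remark that one should set $f(x):=p(Tx)$ and apply Theorem~\ref{thm:2}, and your individual identities ($\hat f(n)=T^{-1}\hat p(n)$, $\hat h(\xi)=T\hat g(T\xi)$, $\supp h\subset[-TA,TA]$, the transfer of the bounded-variation hypothesis and of one-sided limits under the increasing dilation, and the computation $\int p\,\hat g=\int f\,\hat h$) are all correct for the paper's conventions. The problem is the last step, which you assert rather than carry out: inserting $\hat f(m)=T^{-1}\hat p(m)$ into $\sum_{|m|\le TA}\hat f(m)\tfrac12\bigl(h(m^-)+h(m^+)\bigr)$ does \emph{not} give the right-hand side of~\eqref{eq:27}; it gives $\tfrac1T$ times it. So the conclusion of your argument is
\begin{equation*}
\int_{-\infty}^{\infty}p(x)\hat g(x)\,dx=\frac1T\sum_{\substack{n\in\mathbb{Z}\\ |n/T|\le A}}\hat p(n)\cdot\frac{g((n/T)^-)+g((n/T)^+)}{2},
\end{equation*}
and the factor $\tfrac1T$ does not cancel against anything. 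You explicitly warn that the three scaling constants ``must be combined consistently,'' but you never combine them; had you done so, you would have found the leftover $\tfrac1T$.

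This is not a defect of your strategy but a genuine normalization discrepancy in the statement you were asked to prove: with the paper's literal definition $\hat p(n)=\int_{-T/2}^{T/2}e^{-2\pi inx/T}p(x)\,dx$ (no $\tfrac1T$ in front), equation~\eqref{eq:27} fails for $T\neq1$. A concrete check: take $p\equiv1$, $T=\tfrac12$, $g=\Pi$, so $\hat g=\operatorname{sinc}$ and $A=\tfrac12$; the left side is the Dirichlet integral $\int\operatorname{sinc}=1$, while the right side of~\eqref{eq:27} is $\hat p(0)\,\Pi(0)=\tfrac12$. Your version with the prefactor $\tfrac1T=2$ gives $1$, as it should. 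So either the Fourier coefficient is meant to carry the standard normalization $\hat p(n)=\tfrac1T\int_{-T/2}^{T/2}e^{-2\pi inx/T}p(x)\,dx$ (in which case $\hat f(n)=\hat p(n)$ and your bookkeeping closes exactly), or~\eqref{eq:27} needs the prefactor $\tfrac1T$. Your write-up should surface this rather than assert that the substitution ``turns this into the right-hand side of~\eqref{eq:27}.''
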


Functions of compact support are related to signals that are ``band-limited'' in the parlance of the signal processing community, since it is possible to recover a band-limited (continuous) signal by appropriate (discrete) sampling~\cite{unser_sampling-50_2000}. 

\begin{example}
Consider $\psi_1\in L^1(\mathbb{R})$ given by
\begin{equation*}
  \psi_{1}(x)=
  \begin{cases}
    \sqrt{1-x^{2}}, &-1\leq x\leq1. \\
    0, &\textup{otherwise}
  \end{cases}
\end{equation*}
Its Fourier transform is given by
\begin{equation*}
  \hat{\psi}_{1}(\xi)=
  \begin{cases}
    \dfrac{J_{1}(2\pi\xi)}{2\xi}, &x\neq0 \\[2ex]
    \dfrac{\pi}{2}, &x=0
  \end{cases}
\end{equation*}
where $J_{1}$ is the Bessel function of the first kind of order one. (The function given by $J_{1}(\xi)/\xi$ and its scaled versions are sometimes called Sombrero function, besinc function, or jinc function.)  Parseval's formula 
(either \eqref{eq:27} or \eqref{eq:89}) then gives
\begin{equation*}
    \int_{-\infty}^{\infty}\frac{J_{1}(2\pi x)}{2 x}p(x)\, dx
=\sum_{\lvert{n/T}\rvert<1}\hat{p}(n)\sqrt{1-(n/T)^{2}}
\end{equation*}
for any $p\in L^{1}([-\frac{T}{2}, \frac{T}{2}])$. If $0< T\leq1$, only the summand for index $n=0$ remains and we have
\begin{equation*}
  \int_{-\infty}^{\infty}\frac{J_{1}(2\pi x)}{2 x}p(x)\, dx=\hat{p}(0)=\int_{-T/2}^{T/2}p(x)\, dx.
\end{equation*}
To apply Corollary~\ref{cor:2} we need to check that $\psi_1$ is of bounded variation at least locally near 
any point of $\mathbb{R}$, which is not difficult. On the other hand, invoking Theorem~\ref{thm:4} would entail verifying 
that $\hat{\psi}_1$ is of bounded variation, a much more difficult task. 
\end{example}

\begin{example}
Now consider $\psi_2\in L^1(\mathbb{R})$ given by
\begin{equation*}
  \psi_{2}(x)=
  \begin{cases}
    \dfrac{1}{\sqrt{1-x^{2}}}, &-1<x<1 \\[2ex]
    0, &\textup{otherwise}
  \end{cases}
\end{equation*}
with Fourier transform
\begin{equation*}
  \hat{\psi}_{2}(\xi)=\pi J_{0}(2\pi \xi)
\end{equation*}
where $J_{0}$ is the Bessel function of the first kind of order zero. Since $J_{0}\not\in L^{1}(\mathbb{R})$, 
Theorem~\ref{thm:4} can not be applied.

Observe that $\supp\psi_{2}=[-1,1]$ and that $\psi_{2}$ is of bounded variation in the neighborhoods of any point 
except $\pm 1$. Then for $T\not\in\mathbb{N}$, we can apply Corollary~\ref{cor:2} to 
$p\in L^{1}([-\frac{T}{2}, \frac{T}{2}])$ and obtain
\begin{equation*}
\pi\int_{-\infty}^{\infty}J_{0}(2\pi x)p(x)\, dx 
=\sum_{\lvert{n/T}\rvert<1}\frac{\hat{p}(n)}{\sqrt{1-(n/T)^{2}}}
\end{equation*}
As before, if $0<T<1$, we have
\begin{equation*}
\pi\int_{-\infty}^{\infty}J_{0}(2\pi x)p(x)\, dx=\hat{p}(0)=\int_{-T/2}^{T/2}p(x)\, dx.
\end{equation*}
\end{example}

\section{Lobachevsky Integral Formulas}
\label{sec:lobachevsky}

We define the usual convolution of $f,g\in L^1(\mathbb{R})$ by
\begin{equation*}
  (f*g)(x):=\int_{-\infty}^{\infty}f(y)g(x-y)\, dy
\end{equation*}
and write 
\begin{equation*}
  f^{*k}:=\underbrace{f*f*\cdots*f}_{k\textup{ times}}.
\end{equation*}
We introduce the real function $\Pi$ given by
\begin{equation*}
  \Pi(x):=
  \begin{cases}
    1, & \left| x \right|<1/2, \\
    1/2, &  x =\pm1/2, \\
    0, & \left| x \right|>1/2,
  \end{cases}
\end{equation*}
as well as the normalized cardinal sine on $\mathbb{R}$,
\begin{equation*}
  \operatorname*{sinc}(x):=
  \begin{cases}
    \dfrac{\sin\pi x}{\pi x}, &x\neq0 \\[2ex]
    1, &x=0.
  \end{cases}
\end{equation*}
We remark that $\operatorname*{sinc}=\mathcal{F}\Pi$,  
and, more generally, $\operatorname*{sinc}^{k}=\mathcal{F}(\Pi^{*k})$ for $k\in\mathbb{N}\setminus\{0\}$. Furthermore, note that $\operatorname*{supp}{\Pi^{*k}}=[-\frac{k}{2}, \frac{k}{2}]$, and $\Pi^{*k}$ is piecewise polynomial, hence also in $L^{1}(\mathbb{R})$ and is of bounded variation in neighborhoods of all points in $\operatorname*{supp}{\Pi^{*k}}$. Note that $\Pi^{*k}$ are also known as the ``centered B-splines'' in the signal processing community~\cite{unser_sampling-50_2000}.

As a special case of Corollary~\ref{cor:2}, we have the following theorem on the Lobachevsky integral formula. 
\begin{theorem}[]
\label{thm:1}
Let $p\in L^1([-\frac{T}{2}, \frac{T}{2}])$ for some $T>0$ and $k\in\mathbb{N}\setminus\{0\}$. Then 
\begin{equation*}
  \int_{-\infty}^{\infty}\operatorname{sinc}^{k}(x)p(x)\, dx=\sum_{|n/T|\leq k/2}\hat{p}(n)\Pi^{*k} \left( n/T\right)
\end{equation*}
If $k\geq2$, the range of summation may be reduced as follows,
\begin{equation*}
   \int_{-\infty}^{\infty}\operatorname{sinc}^{k}(x)p(x)\, dx=\sum_{|n/T|<k/2}\hat{p}(n)\Pi^{*k}(n/T).
\end{equation*}
\end{theorem}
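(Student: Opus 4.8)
The plan is to obtain the theorem as the stated special case of Corollary~\ref{cor:2}, applied with $g:=\Pi^{*k}$ and $A:=k/2$. First I would assemble the facts about $\Pi^{*k}$ recorded just before the statement: $\Pi^{*k}\in L^{1}(\mathbb{R})$; $\operatorname{supp}\Pi^{*k}=[-\tfrac{k}{2},\tfrac{k}{2}]\subset[-A,A]$; $\Pi^{*k}$ is piecewise polynomial and hence of bounded variation in a neighbourhood of every real point, in particular near every $n/T$ with $\lvert n/T\rvert\le k/2$; and $\mathcal{F}(\Pi^{*k})=\operatorname{sinc}^{k}$. With these, all hypotheses of Corollary~\ref{cor:2} hold, and it yields
\begin{equation*}
  \int_{-\infty}^{\infty}\operatorname{sinc}^{k}(x)\,p(x)\,dx
  =\sum_{\lvert n/T\rvert\le k/2}\hat{p}(n)\cdot\frac{\Pi^{*k}\big((n/T)^{-}\big)+\Pi^{*k}\big((n/T)^{+}\big)}{2}.
\end{equation*}

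It then remains to identify the symmetrized boundary value with the plain value $\Pi^{*k}(n/T)$, and here I would split into two cases. For $k=1$, the function $\Pi$ is continuous except at $\pm\tfrac12$, where it was \emph{defined} to equal the average $\tfrac12$ of its one-sided limits; hence $\tfrac12\big(\Pi(x^{-})+\Pi(x^{+})\big)=\Pi(x)$ for every $x\in\mathbb{R}$, which gives the first formula. For $k\ge 2$, write $\Pi^{*k}=\Pi^{*(k-1)}*\Pi$ with $\Pi^{*(k-1)}\in L^{1}(\mathbb{R})$ and $\Pi\in L^{\infty}(\mathbb{R})$; such a convolution is continuous (by continuity of translation in $L^{1}$, or equivalently because $\Pi^{*k}$ is the $C^{k-2}$ centred B-spline), so the one-sided limits again coincide with the value, and the first formula follows.

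For the reduction of the range of summation when $k\ge 2$, note that the only indices that could be dropped are those with $n/T=\pm k/2$, i.e.\ the endpoints of $\operatorname{supp}\Pi^{*k}$; if $kT/2\notin\mathbb{Z}$ there are no such indices and the two sums are literally equal. When such an index exists, continuity of $\Pi^{*k}$ together with $\Pi^{*k}=0$ outside $[-\tfrac{k}{2},\tfrac{k}{2}]$ forces $\Pi^{*k}(\pm k/2)=0$, so the corresponding terms vanish and may be omitted. This is precisely what fails at $k=1$, where $\Pi(\pm\tfrac12)=\tfrac12\neq 0$, which explains the hypothesis $k\ge 2$.

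The argument is essentially mechanical once Corollary~\ref{cor:2} is available; the only place that needs a little care — and the source of the $k\ge 2$ restriction in the second identity — is the behaviour of $\Pi^{*k}$ at the endpoints $\pm k/2$ and, more generally, the passage from the symmetrized one-sided values to the genuine values of $\Pi^{*k}$.
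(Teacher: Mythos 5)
Your proof is correct and follows exactly the route the paper intends: Theorem~\ref{thm:1} is presented there as an immediate special case of Corollary~\ref{cor:2} with $g=\Pi^{*k}$ and $A=k/2$, relying on the properties of $\Pi^{*k}$ listed just before the statement. Your additional care with the symmetrized one-sided values (the averaging convention for $\Pi$ at $\pm\tfrac12$ when $k=1$, continuity of $\Pi^{*k}$ and its vanishing at $\pm k/2$ when $k\ge 2$) correctly fills in the details the paper leaves implicit.
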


\begin{corollary}[]
Let $k\in\mathbb{N}\setminus\{0\}$ and $p\in L^1([-\frac{T}{2}, \frac{T}{2}])$ for some $T>0$ with $kT\leq2$ if $k\geq 2$, or 
$0<T<2$ if $k=1$. Then
\begin{equation*}
  \int_{-\infty}^{\infty}\operatorname{sinc}^{k}(x)p(x)\, dx=\Pi^{*k}(0)\cdot\int_{-T/2}^{T/2}p(x)\, dx
\end{equation*}
\end{corollary}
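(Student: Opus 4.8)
The plan is to obtain this corollary as an immediate consequence of Theorem~\ref{thm:1}: under the stated hypotheses the index set of the sum on the right-hand side collapses to the single value $n=0$, after which one only needs to recall that $\hat p(0)=\int_{-T/2}^{T/2}p(x)\,dx$ by the definition of the Fourier coefficient.

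First I would dispose of the case $k\geq 2$. Here Theorem~\ref{thm:1} gives
\begin{equation*}
  \int_{-\infty}^{\infty}\operatorname{sinc}^{k}(x)p(x)\,dx=\sum_{\lvert n/T\rvert<k/2}\hat p(n)\,\Pi^{*k}(n/T),
\end{equation*}
and the summation condition $\lvert n/T\rvert<k/2$ is equivalent to $\lvert n\rvert<kT/2$. The hypothesis $kT\leq2$ yields $kT/2\leq1$, so the condition forces $\lvert n\rvert<1$, i.e.\ $n=0$ for integer $n$. Hence the sum reduces to $\hat p(0)\,\Pi^{*k}(0)=\Pi^{*k}(0)\int_{-T/2}^{T/2}p(x)\,dx$, as claimed.

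Next I would treat $k=1$. Theorem~\ref{thm:1} now provides the sum over $\lvert n/T\rvert\leq1/2$, equivalently $\lvert n\rvert\leq T/2$, and the hypothesis $0<T<2$ gives $T/2<1$, so again only $n=0$ survives. Since $\Pi^{*1}=\Pi$ and $\Pi(0)=1$, the right-hand side equals $\hat p(0)=\Pi^{*1}(0)\int_{-T/2}^{T/2}p(x)\,dx$, which is the asserted identity for $k=1$.

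There is no real obstacle beyond keeping track of whether the bound on the index is strict. The reason the case $k\geq2$ tolerates the non-strict hypothesis $kT\leq2$ while the case $k=1$ requires the strict hypothesis $T<2$ is precisely that Theorem~\ref{thm:1} supplies the strict summation bound $\lvert n/T\rvert<k/2$ only for $k\geq2$; for $k=1$ the relevant bound $\lvert n/T\rvert\leq1/2$ is non-strict, so the value $T=2$ would erroneously re-admit the terms $n=\pm1$. Thus the only thing to verify carefully is this boundary bookkeeping; everything else is a direct substitution into Theorem~\ref{thm:1}.
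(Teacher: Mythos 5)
Your proposal is correct and is exactly the argument the paper intends: the corollary is stated as an immediate consequence of Theorem~\ref{thm:1}, with the hypotheses chosen precisely so that the index set collapses to $n=0$ and $\hat p(0)=\int_{-T/2}^{T/2}p(x)\,dx$. Your boundary bookkeeping (strict bound for $k\geq2$ versus non-strict for $k=1$) correctly explains why $kT\leq2$ suffices in one case while $T<2$ is needed in the other.
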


A few identities are then immediate, most notably for $k=1$ and $k=2$,
\begin{equation}
  \label{eq:14}
  \int_{-\infty}^{\infty}\operatorname*{sinc}(x)f(x)\, dx=\int_{-\infty}^{\infty} \operatorname{sinc}^{2}(x)f(x)\, dx=\int_{-1/2}^{1/2}f(x)\, dx
\end{equation}
for $f\in L^1(\mathbb{T})$. It is not a coincidence that the two cardinal sine integrals yield the same value; 
this follows from the fact that $\Pi(0)=\Pi^{*2}(0)=1$ and both $\Pi$ and $\Pi^{*2}$ are continuous at zero.

The identities~\eqref{eq:14} further reduce to the well-known Dirichlet and Fej\'{e}r integrals~\cite{stein_fourier_2011} when $f\equiv 1$,
\begin{align*}
  \int_{-\infty}^{\infty}\frac{\sin\pi x}{\pi x}\, dx&= 
  \int_{-\infty}^{\infty}\left(\frac{\sin\pi x}{\pi x} \right)^{2}\, dx=1.
\end{align*}
More interestingly, when $k=3$, we obtain
\begin{align}
  \int_{-\infty}^{\infty}\operatorname{sinc}^{3}(x)f(x)\, dx&=\hat{f}(0)\Pi^{*3}(0)+\hat{f}(-1)\Pi^{*3}(-1)+\hat{f}(1)\Pi^{*3}(1) \notag \\
  &=\int_{-1/2}^{1/2}f(x)\, d x-\frac{1}{2}\int_{-1/2}^{1/2}f(x)\sin^{2}(\pi x)\, d x \label{eq:43}
\end{align}
and, when $k=4$, 
\begin{align}
  \int_{-\infty}^{\infty}\operatorname{sinc}^{4}(x)f(x)\, dx&=\hat{f}(0)\Pi^{*4}(0)+\hat{f}(-1)\Pi^{*4}(-1)+\hat{f}(1)\Pi^{*4}(1) \notag \\
  &=\int_{-1/2}^{1/2}f(x)\, d x-\frac{2}{3}\int_{-1/2}^{1/2}f(x)\sin^{2}(\pi x)\, d x. \label{eq:38}
\end{align}
The identity~\eqref{eq:38} was previously derived in~\cite{jolany_extension_2018} using complex analytic methods, while~\eqref{eq:43} 
is new, as the results in~\cite{jolany_extension_2018} did not extend to odd-valued integers $k$.


\section{The Poisson Summation Formula}
\label{sec:proofs-theorems}

One of the crucial ingredients in the proof of Theorem~\ref{thm:2} is a version of the Poisson summation formula~\cite[Eq.~(13.4)]{zygmund_trigonometric_2002}. It later appeared explicitly in~\cite[Proposition~1]{baillie_surprising_2008} for functions of compact support. We present the theorem here, along with a clearer proof, which follows~\cite[p.~68]{zygmund_trigonometric_2002}. 

\begin{theorem}[Poisson summation formula]
  \label{thm:3}
Let $g\in L^{1}(\mathbb{R})$, and suppose that there exists some $A>0$ such that 
$\supp g\subset[-A,A]$. Further, let $g$ be of bounded variation in neighborhoods of all $n\in\mathbb{Z}$ 
with $\lvert{n}\rvert\leq A$. Then
  \begin{equation}
    \label{eq:1}
    \sum_{m\in\mathbb{Z}}\hat{g}(m)=\sum_{\substack{n\in\mathbb{Z} \\ \lvert n\rvert\leq A}}\frac{g(n^{+})+g(n^{-})}{2}
  \end{equation}
\end{theorem}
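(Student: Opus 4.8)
The plan is to reduce the statement to the classical Jordan–Dirichlet convergence theorem for Fourier series by periodizing $g$. Define the $1$-periodization
\[
  G(x):=\sum_{m\in\mathbb{Z}}g(x+m).
\]
Since $\supp g\subset[-A,A]$ is compact, for each fixed $x$ this sum has only finitely many nonzero terms (at most $\lceil 2A\rceil+1$ of them), so $G$ is well-defined pointwise; moreover $G\in L^1(\mathbb{T})$ because $\int_{-1/2}^{1/2}|G|\le\int_{\mathbb R}|g|<\infty$ by the triangle inequality and Tonelli. The first step is to compute the Fourier coefficients of $G$: interchanging sum and integral (justified by the finiteness of the sum on any bounded set, or by dominated convergence using $g\in L^1$),
\[
  \hat{G}(m)=\int_{-1/2}^{1/2}G(x)e^{-2\pi i m x}\,dx=\sum_{n\in\mathbb Z}\int_{-1/2}^{1/2}g(x+n)e^{-2\pi i m x}\,dx=\int_{-\infty}^{\infty}g(x)e^{-2\pi i m x}\,dx=\hat{g}(m),
\]
where the shift $x\mapsto x+n$ absorbs the integer translation and $e^{-2\pi i m n}=1$. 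Thus the Fourier series of $G$ is $\sum_{m}\hat g(m)e^{2\pi i m x}$, and evaluating formally at $x=0$ gives the left-hand side of \eqref{eq:1}.

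The second step is to justify that evaluation at $x=0$. Here I invoke the Jordan–Dirichlet test: if $G\in L^1(\mathbb{T})$ is of bounded variation in a neighborhood of a point $x_0$, then the symmetric partial sums of its Fourier series converge at $x_0$ to $\tfrac12\bigl(G(x_0^-)+G(x_0^+)\bigr)$. I must check that $G$ is of bounded variation near $x_0=0$: on a small interval $(-\delta,\delta)$ with $\delta<1$, only finitely many translates $g(\cdot+n)$ with $|n|\le A$ contribute, and each is of bounded variation near $0$ by hypothesis (the translate $g(\cdot+n)$ is BV near $0$ iff $g$ is BV near $n$, and $|n|\le A$), so $G$, a finite sum of BV functions, is BV near $0$. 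Applying the test at $x_0=0$ yields
\[
  \sum_{m\in\mathbb Z}\hat g(m)=\tfrac12\bigl(G(0^-)+G(0^+)\bigr).
\]
Finally, $G(0^\pm)=\sum_{|n|\le A}g(n^\pm)$ (again a finite sum, with $g(n^\pm)=0$ automatically when $|n|>A$ since then $g$ vanishes in a one-sided neighborhood of $n$, and the boundary case $|n|=A$ is harmless as one of the one-sided limits may be zero), which gives the right-hand side of \eqref{eq:1}.

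The main obstacle is the careful bookkeeping at the endpoints $n=\pm A$ when $A$ happens to be an integer: there $g$ may fail to be continuous and the one-sided limits $g(A^-)$, $g(A^+)$ need separate attention, but since $\supp g\subset[-A,A]$ forces $g(A^+)=g((-A)^-)=0$, the averaged value $\tfrac12(g(A^-)+g(A^+))=\tfrac12 g(A^-)$ is exactly what the periodization produces, so no inconsistency arises. A secondary point requiring care is the interchange of summation and integration in computing $\hat G(m)$ and the claim $G\in L^1(\mathbb T)$; both are routine given compact support but should be stated. Everything else is a direct appeal to the classical one-dimensional convergence theorem for Fourier series.
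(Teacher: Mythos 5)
Your proposal is correct and follows essentially the same route as the paper: periodize $g$ to get $G\in L^1(\mathbb{T})$, identify $\hat G(m)=\hat g(m)$, verify bounded variation of $G$ near $0$, and apply the Jordan--Dirichlet test at $x=0$. The extra bookkeeping you supply at the endpoints $n=\pm A$ and in identifying $G(0^\pm)$ as a finite sum of one-sided limits is a welcome elaboration of details the paper leaves implicit, but the argument is the same.
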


The following identity is immediate since $g$ is of compact support.
  \begin{equation}
    \label{eq:7}
    \sum_{m=-\infty}^{\infty}\hat{g}(m+\xi)=\sum_{\substack{n\in\mathbb{Z} \\ \lvert n\rvert\leq A}}\frac{g(n^{+})+g(n^{-})}{2}e^{-2\pi i n\xi}
  \end{equation}


\begin{proof}[Proof of Theorem~\ref{thm:3}]
Following~\cite[p.~68]{zygmund_trigonometric_2002}, we define a periodic function $G$ on $\mathbb{R}$ as follows, 
  \begin{equation*}
    G(x):=\sum_{k=-\infty}^{\infty}g(x+k).
  \end{equation*}
Since $\supp g=[-A, A]$ the sum on the right will be finite for any fixed $x\in\mathbb{R}$. Furthermore, 
this will be the case also when $x$ varies in $[-\frac{1}{2}, \frac{1}{2}]$, so we can write, for suitable $K\in\mathbb{N}$,
\begin{align*}
\int_{-\frac{1}{2}}^{\frac{1}{2}}\lvert{G(x)}\rvert \,dx&\leq
\sum_{k=-K}^{K}\int_{-\frac{1}{2}}^{\frac{1}{2}}\lvert g(x+k)\rvert \,dx \\
&=\sum_{k=-K}^{K}\int_{k-\frac{1}{2}}^{k+\frac{1}{2}}\lvert g(x)\rvert \,dx \\
&=\int_{-\infty}^{\infty}\lvert g(x)\rvert \,dx
\end{align*}
where we have again used the boundedness of the support of $g$. The last integral is finite since $g\in L^{1}(\mathbb{R})$ 
and we conclude that $G\in L^{1}(\mathbb{T})$. 

Since $g$ is of bounded variation in neighborhoods of those  $n\in\mathbb{Z}$ with $\lvert n\rvert\leq A$, we deduce that $G$ is of bounded variation in a neighborhood of $x=0$. Therefore, we can apply the Dirichlet-Jordan test for Fourier series~\cite[p.~406]{titchmarsh_theory_1939} to deduce that the Fourier series expansion of $G$ converges in a neighborhood of $x=0$ as follows, 
\begin{equation}
  \label{eq:10}
  \frac{G(x^{+})+G(x^{-})}{2}=\sum_{m=-\infty}^{\infty}\hat{G}(m)e^{2\pi im x}.
\end{equation}
A direct calculation yields
  \begin{align*}
    \hat{G}(m)&=\int_{-\frac{1}{2}}^{\frac{1}{2}}G(x)e^{-2\pi im x}\,dx 
\\
    &=\sum_{k=-N}^{N}\int_{-\frac{1}{2}}^{\frac{1}{2}}g(x+k)e^{-2\pi im x}\,dx 
\\
    &=\int_{-\infty}^{\infty}g(x)e^{-2\pi im x}\,dx=\hat{g}(m)
  \end{align*}
Setting $x=0$ in~\eqref{eq:10} then establishes~\eqref{eq:1}.
\end{proof}

The above version of the Poisson summation formula may be aplied to functions that are neither of Schwartz class nor
of moderate decay (both need to be continuous as required, e.g., in~\cite{stein_fourier_2011}). 

For example, neither $\Pi$ nor $\operatorname*{sinc}$ are of Schwartz class or moderate decay, yet we can still obtain a meaningful Poisson summation formula for $\Pi$ by applying Theorem~\ref{thm:3}. In particular, using  $\mathcal{F}[\Pi(\pi(\cdot))](\xi)=\frac{1}{\pi}\operatorname*{sinc\bigl(\frac{\xi}{\pi}\bigr)}$, we obtain
\begin{equation*}
  \frac{1}{\pi}\sum_{n\in\mathbb{Z}}\frac{\sin n}{n}=\sum_{m\in\mathbb{Z}}\Pi(\pi m)=\Pi(0)=1.
\end{equation*}

We are now ready to prove Theorem~\ref{thm:2}.

\begin{proof}[Proof of Theorem~\ref{thm:2}]
Since $f$ is periodic with period $1$, we can write
\begin{align}
\int_{-\infty}^{\infty}\hat{g}(\xi)f(\xi)\,d\xi&=\lim_{M\to\infty}\sum_{m=-M}^{M}\int_{m-\frac{1}{2}}^{m+\frac{1}{2}}\hat{g}(\xi)f(\xi)\,d\xi\notag \\
      &=\lim_{M\to\infty}\sum_{m=-M}^{M}\int_{-\frac{1}{2}}^{\frac{1}{2}}\hat{g}(\xi+m)f(\xi+m)\,d\xi\notag \\
      &=\lim_{M\to\infty}\int_{-\frac{1}{2}}^{\frac{1}{2}}\sum_{m=-M}^{M}\hat{g}(\xi+m)f(\xi)\,d\xi\notag
\end{align}
The Poisson summation formula~\eqref{eq:7} guarantees that the series converges and is bounded, so that by the dominated convergence theorem 
the limit and the integral can be exchanged.   %
Moreover, 
  \begin{align}
       \int_{-\infty}^{\infty}\hat{g}(\xi)f(\xi)\,d\xi &=\int_{-\frac{1}{2}}^{\frac{1}{2}}\lim_{M\to\infty}\sum_{m=-M}^{M}\hat{g}(\xi+m)f(\xi)\,d\xi\notag \\
      &=\int_{-\frac{1}{2}}^{\frac{1}{2}}\sum_{\substack{n\in\mathbb{Z} \\ \lvert n\rvert\leq A}}\frac{g(n^{-})+g(n^{+})}{2}e^{-2\pi i n \xi}f(\xi)\,d\xi\notag \\
    &=\sum_{\substack{n\in\mathbb{Z} \\ \lvert n\rvert\leq A}}\frac{g(n^{-})+g(n^{+})}{2}\hat{f}(n),\notag
  \end{align}
completing the proof.
\end{proof}

\bibliographystyle{plain}
\bibliography{manuscript-lbk}
\end{document}